\newtheorem{theorem}{Theorem}
\newtheorem{prop}{Proposition}
\newtheorem{defn}{Definition}
\newtheorem{lemma}{Lemma}
\newcommand{\norm}[1]{\left\| #1 \right\|}
\newcommand{\eps}{\varepsilon}
\renewcommand{\i}{\ifmmode\mathit{\mathchar"7010 }\else\char"10 \fi}
\renewcommand{\j}{\ifmmode\mathit{\mathchar"7011 }\else\char"11 \fi}
\newcommand{\R}{\mathbb{R}}
\newcommand{\N}{\mathbb{N}}
\begin{document}\large

%\title{A Chebyshev Spectral Method for a nonlinear peridynamic model with  non-periodic spatial boundary conditions}%
%\title{A non-periodic Chebyshev spectral method for a nonlinear peridynamic model}
\title[A filtered Chebyshev spectral method for conservation laws on network]{A filtered Chebyshev spectral method for conservation laws on network}

%\author[L. Lopez]{Luciano Lopez}

\author[S. F. Pellegrino]{Sabrina Francesca Pellegrino$^{\ast}$%\footnote{Corresponding author}
} 

%\footnote{$^*$ correspondng author}

%\address[L. Lopez]{Dipartimento di Matematica, Universit\`a degli Studi di Bari Aldo Moro, via E. Orabona 4, 70125 Bari, Italy}
%\email{luciano.lopez@uniba.it}

\address[S. F. Pellegrino]{Dipartimento di Ingegneria Elettrica e dell'Informazione, Politecnico di Bari, Via E. Orabona, 4 - 70125 Bari (BA), Italy}
\email{sabrinafrancesca.pellegrino@poliba.it}

\thanks{$^\ast$Corresponding author.}

\begin{abstract}

We propose a spectral method based on the implementation of Chebyshev polynomials to study a model of conservation laws on network. We avoid the Gibbs phenomenon near shock discontinuities by implementing a filter in the frequency space in order to add local viscosity able to contrast the spurious oscillations appearing in the profile of the solution and we prove the convergence of the semi-discrete method by using the compensated compactness theorem. thanks to several simulation, we make  a comparison between the implementation of the proposed method with a first order finite volume scheme.
%%%%
\end{abstract}

\maketitle

{\bf{\textit{Keywords.}}} Conservation laws on network, Chebyshev spectral method, Filters, Super Spectral Viscosity

%\noindent
%{\em 2010 MSC.} 35L05, 35Q74, 65M70, 65M85, 65T50.

%\end{frontmatter}

%\linenumbers

%% main text
 %\\ 
\section{Introduction}

Modeling transport on networks is an important issue arising several contexts, such as for managing crowd evacuations in case of natural disasters (see~\cite{ADR-M3AS}), to study the evolution of cracks in peridynamics and in fluid-dynamic models, (see for instance~\cite{BDFP,AC2020}), to investigate infiltration in soils (see~\cite{Berardi_Difonzo_EFMC_2019,Berardi_Difonzo_Lopez_CAMWA_2020,Berardi_Difonzo_Vurro_Lopez_ADWR_2018}) and for traffic management~\cite{DSDPR,ADRR-numerical}, etc. 

Several mathematical models have been proposed to deepen such issue, and, in particular, models consisting of systems of conservation laws are often used to describe such situations. They are characterized by the fact that their solutions may develop discontinuities in finite time even when the initial conditions are analytical (see~\cite{BressanBook}). Moreover, the problem is further complicated by the fact that, when conservation laws are applied on network, there is the need to establish a suitable transmission condition at the junction, (see~\cite{GaravelloPiccoliBook}).

From a numerical point of view, systems of conservation laws are treated by using finite volume schemes, Glimm schemes (see~\cite{AGS,Pellegrino,ACD,Lebacque}) or also mimetic finite difference methods (see~\cite{Lopez_Vacca_2016}). They are first order monotone, consistent and stable schemes. A different approach consists in considering high-order spectral schemes. Indeed they are efficient method with exponential accuracy when applied to problems whose solutions have a certain amount of regularity. When solutions develop discontinuities in the domain, the exponential accuracy is lost and spectral methods show the Gibbs phenomenon, that is the formation of spurious oscillations in the profile of the solution. Such phenomenon can appear both at the boundaries of the domain or close to discontinuities. In the first case, one can overcome the phenomenon by choosing non-uniform mesh grid, with collocation points denser at the boundaries or by imposing periodic boundary conditions (see for instance~\cite{LP,LP2021,Jafarzadeh}).

Other strategies have to be adopted in order to recover global high-order accuracy when damping appears in proximity of shocks. This represents an interesting challenge and a good framework for a machine learning approach to the problem. Indeed, in~\cite{deepray}, the authors propose a projection-based reduced order model based on the Fourier collocation method for compressible flows and use a neural network to limit oscillations.

%Spectral methods are efficient and numerical scheme characterized by an exponential accuracy when applied to problems whose solutions have a certain amount of regularity. When solutions develop discontinuities in the domain, the exponential accuracy is lost and spectral methods show the Gibbs phenomenon, consisting in the formation of spurious oscillations in the profile of the solutions.

A different approach to recover spectral accuracy and to remove Gibbs phenomenon is based on the introduction of dissipative or filtered mechanism. Following this idea, in this paper we proposed a filtered Chebyshev spectral method to study numerically a system of conservation laws on network. The choice of using Chebyshev polynomials, instead of the Fourier ones, is related to the fact that Chebyshev interpolation does not require any periodicity assumption at the boundaries, and is also able to avoid Gibbs phenomenon at the boundaries thanks to the use of Chebyshev Gauss-Lobatto points. We filter the Chebyshev modes in the frequency space to reduce oscillations near discontinuities. We couple the method with a monotone transmission condition at the junction and we reformulate the methods by adding a super spectral viscosity in order to prove the convergence of the semi-discrete method, and, thanks to several simulations we show the gain of one order of convergence with respect to the implementation of a finite volume scheme.

The paper is organized as follows. In Section~\ref{sec:conslawnetwork} we state the problem and provide a briefly review on conservation laws on network. In Section~\ref{sec:Chebyshev}, we describe the filtered Chebyshev collocation method and prove its convergence. In Section~\ref{sec:fully} we construct the fully discrete method . Section~\ref{sec:numericalSimulations} is devoted to the numerical simulations. Finally, Section~\ref{sec:concl} concludes the paper.

\section{Statement of the problem}\label{sec:conslawnetwork}

We consider a model of conservation laws describing the flow of a mass density on a star shaped network consisting of two incoming edges, one outgoing edge and one junction. The generalization to network consisting of more edges is trivial. Moreover, due to the property of finite speed of propagation in hyperbolic equations, it is enough to consider networks having a unique junction.

Each edge is parameterized by $x\in\Omega_h\subset\R$, for $h\in\{1,2,3\}$, so that the network is given by $\Gamma=\Pi_{h=1}^3 \Omega_h\subset\R^3$ and the junction belongs to $\bigcap_{h=1}^3\partial \Omega_h$, (see Figure~\ref{fig:merge} for a basic representation of the network). Let denote by $\bar{x}$ the location of the junction.

\begin{figure}
    \centering
    \includegraphics[width=0.35\textwidth]{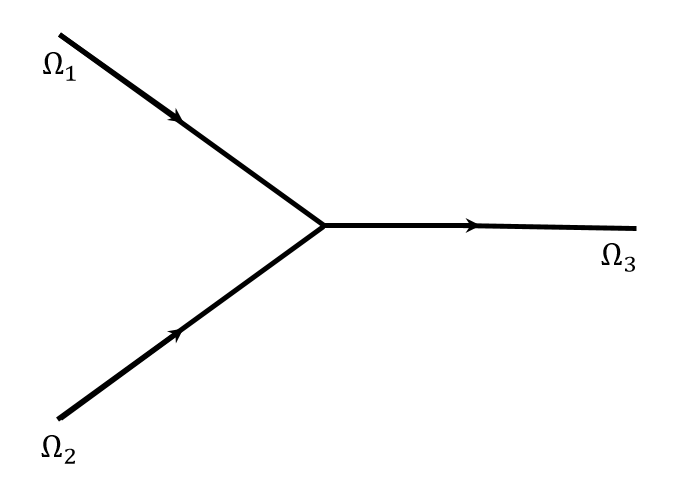}
    \caption{The representation of a network consisting in two incoming edges and one ougoing edge.}
    \label{fig:merge}
\end{figure}

On each edge, we consider an hyperbolic conservation law of the form
\begin{equation}
\label{eq:cl}
\frac{\partial u_h}{\partial t}(x,t) + \frac{\partial }{\partial x} f_h(u_h(x,t)) =0,\qquad t>0,\quad x\in\Omega_h,\quad h=1,2,3,
\end{equation}
where $u_h$ is the density and $f_h$ is the flux on each edge. We assume that the edges have a common maximal density $u_{\max}>0$ and the fluxes are bell-shaped, Lipschitz and nonlinear non-degenerate, namely we require that $f_h$ satisfies the following properties
\begin{itemize}
    \item[1.] $f_h\in Lip([0,u_{\max}];\R_+)$, with $\norm{f_h}_\infty\le L_h$, for certain $L_h>0$,
    \item[2.] $f_h(0)=f_h(u_{\max})=0$,
    \item[3.] there exists $u_{h,c}\in(0,u_{\max})$ such that $f_h'(u)\left(u_{h,c}-u\right)>0$ for a.e. $u\in[0,u_{\max}]$,
    \item[4.] $f_h'$ is not constant on any non-trivial subinterval of $[0,u_{\max}]$, 
\end{itemize}
for every $h=1,2,3$.

Under such assumptions on the flux, we have the existence of the strong traces for the solution (see~\cite{panov,vasseur}).

To complete the model, we add the initial conditions
\begin{equation}
\label{eq:initcond}
u_h(x,0)=u_{h,0}(x),\quad x\in\Omega_h,\qquad h=1,2,3,
\end{equation}
and we assume that $u_{h,0}\in L^2(\Omega_h)\cap L^\infty(\Omega_h)$, for $h=1,2,3$.

We also require the conservation of the total density at the junction, namely for a.e. t>0, it holds
\begin{equation}
\label{eq:consjunc}
f_1\left(u_1\left(\bar{x}^-,t\right)\right)\ + f_2\left(u_2\left(\bar{x}^-,t\right)\right) = f_3\left(u_3(\bar{x}^+,t)\right).
\end{equation}

The well-posedness of the Cauchy problem is proved in~\cite{ACD} in the framework of admissible solution at the junction in terms of vanishing viscosity germ, (see~\cite{AC2015,AKR2010,AKR2011}). In their work, the authors propose a finite volume scheme to approximate numerically the solution on the network and prove its convergence. While a validation of such a scheme can be found in~\cite{Pellegrino}.

The construction of an admissible weak solution is related with the definition of a Riemann solver at the junction, that is an operator which associates to any constant initial condition on the whole network a self-similar weak solution of~\eqref{eq:cl}-\eqref{eq:initcond}.

Following~\cite{ACD,Pellegrino}, we find that the selection of an admissible Riemann solver at the junction can be understood as a collection of initial boundary value problems of the form
\begin{equation}
\label{eq:IBVP}
\begin{cases}
    \frac{\partial }{\partial t} u + \frac{\partial}{\partial x}f_h\left(u_h\right)=0&\qquad (x,t)\in\Omega_h\times [0,T],\\
    u_h(x,0)=u_{h,0}(x)&\qquad x\in\Omega_h\\
    u_h(\bar{x},t)=u_{h,b}(t),&\qquad t\in[0,T],
\end{cases}\qquad h=1,2,3,
\end{equation}
where the boundary functions have to be fixed in order to ensure the conservation at the junction~\eqref{eq:consjunc}.

In~\cite{Bardos}, Bardos-LeRoux-N\'ed\'elec reformulate the conservation condition~\eqref{eq:consjunc} in terms of traces of the solution and by using the Godunov flux.

\begin{defn}
The {\em Godunov flux} associated to the flux $f_h$ is $G_h:[0,u_{\max}]^2\to\R$ defined as follows
\begin{equation}
\label{eq:Godunovflux}
G_h(a,b)=\begin{cases}
    \min_{s\in[a,b]} f_h(s),&\quad a\le b\\
    \max_{s\in[a,b]} f_h(s),&\quad a\ge b,
\end{cases}
\end{equation}
for any couple $(a,b)\in[0,u_{\max}]^2$.
\end{defn}
The Godunov flux is the function which associates to any couple $(a,b)\in[0,u_{\max}]^2$ the value $f_h(u_h(\bar{x}^-,t))=f_h(u_h(\bar{x}^+,t))$, where $u_h$ is the Kruzhkov entropy solution to the Riemann problem (see~\cite{Kruzkov,LeFloch,Bardos})
\begin{equation}
\label{eq:Riemannpb}
\begin{cases}
    \frac{\partial}{\partial t} u_h + \frac{\partial}{\partial x} f_h(u_h) = 0,&\quad x\in\R,\, t>0,\\
    u_h(x,0)=\begin{cases}
        a,\quad x<\bar{x},\\
        b,\quad x>\bar{x},
    \end{cases}
    &\quad x\in\R.
\end{cases}
\end{equation}

As proved in~\cite{Bardos}, the conservation condition~\eqref{eq:consjunc} is equivalent to ask
\begin{equation}
\label{eq:BCleft}
f_h\left(u_h\left(\bar{x}^-,t\right)\right)=G_h\left(u_h\left(\bar{x}^-,t\right),\ u_{h,b}(t)\right),\quad h=1,2,
\end{equation}
and
\begin{equation}
\label{eq:BCright}
f_h\left(u_h\left(\bar{x}^+,t\right)\right)=G_h\left(u_{h,b}(t),\ u_h\left(\bar{x}^+,t\right)\right),\quad h=3.
\end{equation}

If the system~\eqref{eq:cl} admits a solution, then there exists $u_b:\R_+\to[0,u_{\max}]$ such that (see~\cite{AC2015,AM})
\begin{equation}
\label{eq:existub}
u_{h,b}(t)=u_b(t),\qquad \text{for a.e. }t>0,\text{and for all }h=1,2,3.
\end{equation}

Using~\eqref{eq:BCleft}-\eqref{eq:BCright}-\eqref{eq:existub}, the conservation condition~\eqref{eq:consjunc} becomes
\begin{equation}
\label{eq:BCcriterion}
G_1\left(u_1\left(\bar{x}^-, t\right),\ u_b(t)\right) + G_2\left(u_2\left(\bar{x}^-, t\right),\ u_b(t)\right) = 
G_3\left(u_b(t),\ u_3\left(\bar{x}^+, t\right)\right).
\end{equation}

As a consequence, studying the model~\eqref{eq:cl}-\eqref{eq:initcond}-\eqref{eq:consjunc} is equivalent to study
\begin{equation}
\label{eq:model}
\begin{cases}
    \frac{\partial }{\partial t} u + \frac{\partial}{\partial x}f_h\left(u_h\right)=0&\qquad (x,t)\in\Omega_h\times [0,T],\\
    u_h(x,0)=u_{h,0}(x)&\qquad x\in\Omega_h\\
    u_h(\bar{x},t)=u_{b}(t),&\qquad t\in[0,T],
\end{cases}\qquad h=1,2,3,
\end{equation}
where for every $t>0$, we have to compute the boundary value $u_b(t)$ by solving the non-linear equation~\eqref{eq:BCcriterion}.

\section{Chebyshev collocation method}\label{sec:Chebyshev}

In what follows, we construct a spectral approximation of the solution to~\eqref{eq:model}-\eqref{eq:BCcriterion} by using the discrete Chebyshev expansion.

We fix the total number of collocation points for each edge of the network $N>0$ and we assume $\Omega=[-1,1]^3$. So, the junction is located at $\bar{x}_{in}=1$ for the incoming edges and at $\bar{x}_{out}=-1$ for the outgoing one. By an affine linear transformation we can always consider the product of more general intervals.

We discretize each interval by the Chebyshev-Gauss-Lobatto points (CGL) defined as $x_j=\cos\left(\frac{\pi j}{N}\right)$, for $j=0,\dots,N$. They are non uniform mesh points, simmetrically distibuted with respect the average value of the reference domain. Due to their geometry, CGL points allows us to avoid Gibb's phenomenon at the boundaries. However, as we will see in the next section, in order to remove such phenomenon near discontinuities, we need to introduce a filtered procedure.

On each edge, we seek an approximation of the solution $u_h$ given by
\begin{equation}
\label{eq:Chebyapprox}
u_h^N(x,t)=\sum_{k=0}^N \tilde{u}_{h,k}(t) \ T_k(x),\qquad h=1,2,3,
\end{equation}
where for every $k=0,\dots,N$, $T_k(x)=\cos\left(k\arccos(x)\right)$ are the Chebyshev polynomials of the first order, and $\tilde{u}_{h,k}(t)$ are the Chebyshev coefficients given by
\begin{equation}
\label{eq:Chebycoef}
\tilde{u}_{h,k} (t) = \frac{1}{\gamma_k}\sum_{n=0}^N u_h(x_n,t) T_k(x_n) w_n,\qquad h=1,2,3,
\end{equation}
where
\begin{equation}
\label{eq:gamma}
\gamma_k=\begin{cases}
    \pi&\qquad k=0,\,\text{or}\,k=N\\
    \frac{\pi}{2}&\qquad k=1,\dots,N-1
\end{cases}
\end{equation}
and
\begin{equation}
\label{eq:w}
w_n=\begin{cases}
    \frac{\pi}{2N}&\qquad k=0,\,\text{or}\,k=N\\
    \frac{\pi}{N}&\qquad k=1,\dots,N-1
\end{cases}
\end{equation}
Chebyshev polynomials are orthogonal with respect to the weight function $w(x)=1/sqrt{1-x^2}$, moreover, thanks to their definition, Chebyshev polynomials can be seen, under a suitable change of variables, as trigonometric cosine functions. Therefore, we can compute them efficiently by using the Fast Fourier Transform algorithm (FFT).

In order to deduce a semidiscrete formulation of the model, we have to replace $u_h(x,t)$ in~\eqref{eq:model} by its Chebyshev approximation $u_h^N(x,t)$, for every $h=1,2,3$.
\begin{equation}\label{eq:50}\frac{\partial}{\partial t} u_h^N(x,t) + \frac{\partial}{\partial x} I_N\left(f_h\left(u_h^N(x,t)\right)\right)=0,\end{equation}
where $I_N$ is the unique interpolation operator such that $I_N u_h^N(x_j,t)=u_h(x_j,t)$, for $j=0,\dots,N$ and $h=1,2,3$.

We want to study~\eqref{eq:50} in the frequency space. To do this, we need to recall for the relationship between Chebyshev coefficients of a function and Chebyshev coefficients of its partial derivative of any order. It is easy to verify that the Chebyshev coefficients of $\frac{\partial\ u_h^N}{\partial x}(x,t)$ are given by
\[\tilde{u}^{\,'}_{h,k}=\sum_{n=0}^N D_{kn}\tilde{u}_{h,n},\]
where $D$ is the derivative matrix, whose entries are given by
\[
D_{k,n}=\begin{cases}
n-1&\qquad k=1\,\text{and}\,n\,\text{even}\\
2(n-1)&\qquad k>1\,\text{and}\,n>k\,\text{and} n+k\,\text{odd}\\
0&\qquad \text{otherwise}
\end{cases}
\]
Moreover, we can relate the coefficients of a product of two functions to those of these two functions.
\begin{lemma}
\label{lm:product}
Let $\tilde{f}_k$ and $\tilde{g}_k$ be the Chebyshev coefficients of two functions $f$ and $g$, respectively, for $k=0,\dots,N$, then the $i$-th Chebyshev coefficient of the product function $fg$ is given by
\[
\tilde{\left(fg\right)}_i=\sum_{n=0}^N\sum_{m=0}^N P_{inm} \tilde{f}_n\tilde{g}_m
\]
where $P$ is a $(N+1)\times(N+1)\times(N+1)$ tensor, whose $N+1$ sub-matrix are defined as follows
\[
P_{1nm}=\begin{cases}
    1&\quad n=m=1\\
    \frac{1}{2}&\quad n>1\quad\text{and}\quad n=m\\
    0&\quad \text{otherwise}
\end{cases}
\]
%\qquad 
\[
P_{2nm}=\begin{cases}
    1,&\quad (n=1,\,m=2)\quad\text{or}\quad(n=2,\,m=1)\\
    \frac{1}{2},&\quad |n-m|=1\quad\text{and}\quad n>2\\
    0,&\quad \text{otherwise}
\end{cases}
\]
and
\[
P_{inm}=\begin{cases}
    1,&\quad (n=i,\,m=1)\quad\text{or}\quad (n=1,\,m=i)\\
    \frac{1}{2}&\quad n+m=i+1\quad \text{and}\quad n=2,\dots,i-1\\
    \frac{1}{2},&\quad |n-m|=i-1\quad\text{and}\quad n=i+1,\dots,N+1
\end{cases}\quad i=3,\dots,N+1
\]
\end{lemma}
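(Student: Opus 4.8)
The plan is to reduce the entire statement to the single linearization identity for Chebyshev polynomials and then carry out the bookkeeping that reorganizes the resulting double sum into the tensor $P$. The only nontrivial analytic input is the product-to-sum formula, which I would record first. Since $T_k(x)=\cos\left(k\arccos x\right)$, the substitution $x=\cos\theta$ turns each $T_k$ into $\cos(k\theta)$, and the elementary identity $\cos(n\theta)\cos(m\theta)=\tfrac12\left(\cos((n+m)\theta)+\cos((n-m)\theta)\right)$ gives at once
\[
T_n(x)\,T_m(x)=\tfrac12\left(T_{n+m}(x)+T_{|n-m|}(x)\right),\qquad n,m\ge 0,
\]
valid for all nonnegative $n,m$ with the convention $T_0\equiv 1$.

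Next I would substitute the expansions $f=\sum_{n=0}^N\tilde f_n T_n$ and $g=\sum_{m=0}^N\tilde g_m T_m$, interchange the finite sums, and apply the linearization identity to every product $T_nT_m$:
\[
fg=\sum_{n=0}^N\sum_{m=0}^N \tilde f_n\tilde g_m\, T_nT_m
=\tfrac12\sum_{n=0}^N\sum_{m=0}^N \tilde f_n\tilde g_m\left(T_{n+m}+T_{|n-m|}\right).
\]
Reading off the coefficient of a fixed mode $T_i$ then amounts to collecting every pair $(n,m)$ for which either $n+m=i$ (the \emph{sum} family) or $\abs{n-m}=i$ (the \emph{difference} family), each contributing the weight $\tfrac12\,\tilde f_n\tilde g_m$. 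I would then check that, for every retained index $i\le N$, each such pair automatically satisfies $n,m\le N$ --- sum pairs because $n+m=i\le N$, and difference pairs because of the stated range --- so that no truncation or aliasing error enters and the formula is exact on the retained modes; this is precisely what lets the tensor be supported on $\{0,\dots,N\}^3$.

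The only real work, and the place where care is required, is the edge-case analysis that upgrades certain weights from $\tfrac12$ to $1$ and produces the three separate blocks $P_{1nm}$, $P_{2nm}$, and $P_{inm}$. The doubling occurs exactly when the sum-index $n+m$ and the difference-index $\abs{n-m}$ coincide and both land on the target mode: this happens when one factor is the constant mode (say $m=0$, where $T_nT_0=T_n$ forces $n+m=\abs{n-m}=n$), yielding the ``$=1$'' entries that pair a mode with the zeroth coefficient, and it also governs the zeroth target mode ($i=1$ in the paper's one-based labelling), where the diagonal terms $T_n^2=\tfrac12\left(T_{2n}+T_0\right)$ feed $T_0$ with weight $\tfrac12$ while $T_0^2=T_0$ feeds it with weight $1$. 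After separating these coincident cases, the remaining contributions split cleanly into the sum family (both indices positive, $n+m=i$) and the difference family (larger index at least $i$, $\abs{n-m}=i$), each carrying weight $\tfrac12$, which is exactly the content of the piecewise definition of $P$. I expect the main obstacle to be purely combinatorial --- matching the one-based tensor indices against the zero-based coefficient indices and verifying the ranges in each branch --- rather than anything analytic.
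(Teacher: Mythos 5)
Your proposal is correct and takes essentially the same approach as the paper: the paper's entire proof consists of citing the linearization identity $T_n(x)\,T_m(x)=\tfrac12\bigl(T_{n+m}(x)+T_{|n-m|}(x)\bigr)$, which is precisely your starting point, and your additional bookkeeping (the sum family $n+m=i$, the difference family $|n-m|=i$, and the doubled weights when one factor is $T_0$) just makes explicit what the paper leaves to the reader. If anything, your symmetric description of the difference family (``larger index at least $i$'') is cleaner than the paper's literal, one-sided index ranges in the definition of $P$.
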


\begin{proof}
The proof is a simple application of the following recursive relationship among Chebyshev polynomials
\[
T_n(x)\ T_m(x) = \frac{1}{2}\left(T_{n+m}(x) + T_{|n-m|(x)}\right).
\]
\end{proof}

We replace $u_h(x,t)$ by $u_h^N(x,t)$ in the model~\eqref{eq:model}, for every $h=1,2,3$. Thus, if we make use of the previous results, at each collocation point, we can reformulate the model~\eqref{eq:50} in the frequency space as follows
\begin{equation}
\label{eq:discmethodinterior}
\frac{d}{dt}\tilde{u}_{h,k}(t) + \sum_{n=0}^N\sum_{m=0}^N\sum_{i=0}^N P_{knm}D_{mi}\tilde{f'}_{h,n}(t)\ \tilde{u}_{h,i}(t)=0, \qquad k=0,\dots,N,\quad h=1,2,3.
\end{equation}
where $\tilde{f'}_{h,n}$ is the $n$-th Chebyshev coefficients of $f'_h (u_h^N(x,t))$.

Using the same argument as before, we can approximate the initial conditions in the frequency space as follows
\begin{equation}
\label{eq:discmethodinitcond}
\tilde{u}_{h,k}(0) = \tilde{u_0}_{h,k},\qquad k=0,\dots N,\quad h=1,2,3.
\end{equation}

To close the discrete model, we need to compute and approximate the boundary conditions~\eqref{eq:BCcriterion} in the frequency space.

For every $t>0$ we have to find a zero of a scalar nonlinear function. It is easy to prove that the nonlinear function in~\eqref{eq:BCcriterion} is monotone and continuous but not everywhere differentiable since so is the Godunov flux. As a consequence, $u_b(t)$ can be computed by using Regula Falsi method.

Once the boundary value has been found, 
%we rewrite it in the frequency space. We have
%\begin{equation}
%\label{eq:tildeub}
%\tilde{u_b}(t) = (u_b(t),0,\dots,0)\in\R^{N+1}
%\end{equation}
%Hence, in the frequency space, the boundary conditions are
we impose
\begin{equation}
\label{eq:discmethodboundary}
%\tilde{u}_{1,N}(t)=\tilde{u}_{2,N}(t)=\tilde{u}_{3,0}(t)=\tilde{u_b}(t),\qquad t>0.
\sum_{k=0}^N \tilde{u}_{1,k}(t)=\sum_{k=0}^N \tilde{u}_{2,k}(t)=\sum_{k=0}^N (-1)^k\  \tilde{u}_{3,k}(t)=u_b(t),\qquad t>0,
\end{equation}
so that, in the physical space we have
\begin{equation}
\label{eq:bcphysical}
u_1\left(x_N,t\right)=u_2\left(x_N,t\right)=u_3\left(x_0,t\right)=u_b(t),\qquad t>0.
\end{equation}

%{\blue Spiegare come calcolare la condizione al bordo e inserire la condizione iniziale!}
\subsection{Filtering for Chebyshev method}

Due to their high-order accuracy, spectral methods, when applied to nonlinear problem, may cause artificial numerical oscillations, known as Gibbs phenomenon. Such phenomenon can reduce drastically the accuracy of the method.

The formulation and the analysis of high-order accurate methods able to do not generate Gibbs phenomenon is an interesting challenge. Several results has been obtained in order to better understand the problem.

Filtering is a strategy to limit oscillations in the solution~\cite{Hesthaven}. It has been introduced in the framework of signal process with the aim to reduce the noises in a process. In our context, we implement a filter to make a specific modification of the Chebyshev coefficients in order to have a control of the oscillations in the profile of the solution.

In particular, a low-order filter dampens high-frequency components, while a high-order filters are able to localize oscillations close to discontinuities.

\begin{defn}
\label{def:filter}
A filter of order $p>0$ is a real function $\sigma\in\mathcal{C}^{p-1}(\R)$ satisfying the following properties
\begin{itemize}
    \item $\sigma(\eta) =0$ for $\eta<0$ and $\eta>1$,
    \item $\sigma(0)=1$,
    \item $\sigma(1)=0$,
    \item $\sigma^{(m)}(0)=\sigma^{(m)}(1)=0$ for $1\le m \le p-1$.
\end{itemize}
\end{defn}

In what follows we consider an exponential filter function defined as
\begin{equation}
 \label{eq:sigma}
 \sigma(\eta,p)=\begin{cases}
     \exp\left(-\beta\ \eta^p\right),&\quad 0\le\eta\le1\\
     0,&\quad \text{otherwise}
 \end{cases}
\end{equation}
where $p$ refers to the order of the filter, and $\beta$ is the damping rate. To ensure that the exponential filter~\eqref{eq:sigma} satisfies Definition~\ref{def:filter}, we choose $\beta=-\log(\eps_M)$, where $\eps_M$ is the machine accuracy. In Figure~\ref{fig:filtro} we show the behavior of exponential filters of different orders.

\begin{figure}
    \centering
    \includegraphics[width=0.5\textwidth]{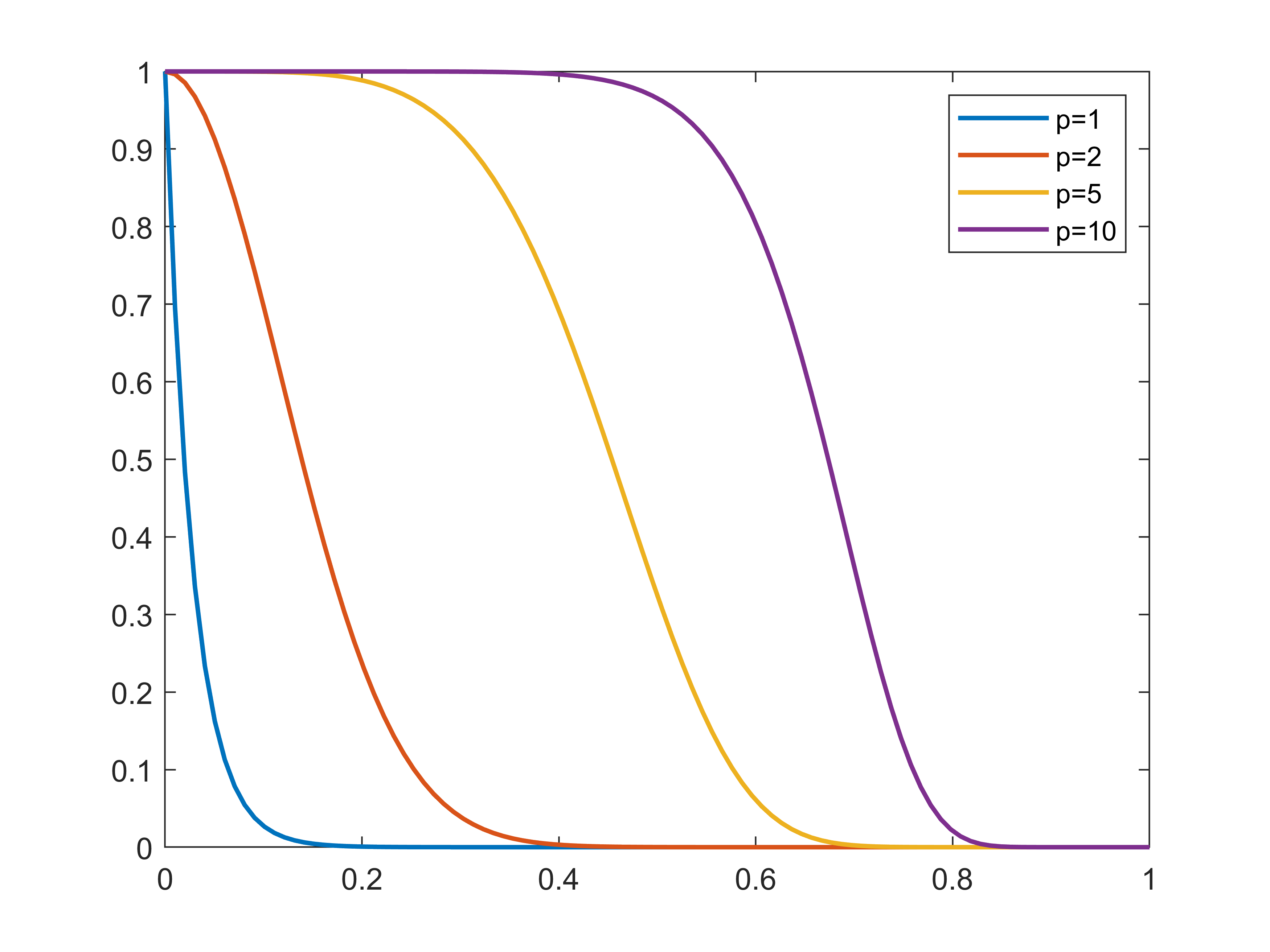}
    \caption{Different exponential filters at varying of the order $p$.}
    \label{fig:filtro}
\end{figure}

To apply the filtering strategy to our model, we modify the $k$-th Chebyshev coefficient defined in~\eqref{eq:Chebycoef} by pre-multiplication with $\sigma\left(\frac{k}{N}\right)$, namely the $k$-th filtered coefficients becomes
\begin{equation}
\label{eq:FChebycoef}
\tilde{u}_{h,k}^{\,\,\sigma}(t)=\sigma\left(\frac{k}{N}\right)\ \tilde{u}_{h,k}(t).
\end{equation}
Thus, the semi-discrete method~\eqref{eq:discmethodinterior} becomes
\begin{equation}
\label{eq:filterdiscmethodinterior}
\frac{d}{dt}\tilde{u}_{h,k}^{\,\,\sigma} (t) + \sum_{n=0}^N\sum_{m=0}^N\sum_{i=0}^N P_{knm}D_{mi}\tilde{f'}_{h,n}^{\,\,\sigma} (t)\ \tilde{u}_{h,i}^{\,\,\sigma}(t)=0, \qquad k=0,\dots,N,\qquad h=1,2,3.
\end{equation}

%\begin{rmk}
%The result obtained in Theorem~\ref{th:conv} can be extended to the filtered scheme~\eqref{eq:filfully}, due to the boundness of the exponential filter in~\eqref{eq:sigma}.
%\end{rmk}

\subsection{Super Spectral Viscosity}

When spectral methods are applied to nonlinear hyperbolic equations in conservation form, the problem of obtaining an entropy satisfying solution arises. Unmodified spectral methods do not converge to the correct entropy solution if the solution contains shocks~\cite{Tadmor1989}.

Spectral calculations can be stabilized by using exponential filters on the conserved variable at each time step. An equivalent approach consists in applying a Super Spectral Viscosity (SSV) in the discretized model. Indeed, a SSV method converges to the correct entropy solution and maintains the spectral accuracy, when the solution is uniformly bounded.

In this section we introduce the Super Spectral Viscosity in the semi-discrete scheme and we show the equivalence of such approach with the implementation of an exponential filter. As we will see in the next section, the introduction of the SSV method allows us to prove the convergence of the semi-discrete scheme.

We defined the Super Spectral Viscosity for the Chebyshev collocation method as follows
\begin{equation}
    \label{eq:SSV}
    SV(u_h^N) = \frac{\eps(-1)^{s+1}}{N^{2s-1}}\left(\sqrt{1-x^2}\frac{\partial}{\partial x}\right)^{2s} \left(u_h^N\right) = \frac{\eps(-1)^{s+1}}{N^{2s-1}}Q^{2s} \left(u_h^N\right), \quad h=1,2,3,
\end{equation}
where $\eps>0$ is the super viscosity coefficient, and $s$ is an integer growing with $N$.

Thus, the Chebyshev collocation method can be written in the following way
\begin{equation}
    \label{eq:ssvmethod}
    \frac{\partial}{\partial t} u_h^N(x,t) + \frac{\partial}{\partial x} I_N\left(f_h\left(u_h^N\right)\right) = \frac{\eps(-1)^{s+1}}{N^{2s-1}}\left(\sqrt{1-x^2}\frac{\partial}{\partial x}\right)^{2s} \left(u_h^N\right),\quad h=1,2,3.
\end{equation}

\begin{prop}
Applying the SSV method to the Chebyshev collocation method is equivalent to apply the exponential filter~\eqref{eq:sigma} with $\beta=\eps N \Delta t$ and $p=2s$.
\end{prop}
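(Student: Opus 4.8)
The plan is to exploit the fact that the differential operator $Q=\sqrt{1-x^2}\,\px$ acts diagonally on the Chebyshev basis, so that the SSV term in~\eqref{eq:ssvmethod} reduces to a multiplicative rescaling of each Chebyshev coefficient, which I can then compare directly to the filtered coefficients~\eqref{eq:FChebycoef}.

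First I would compute the action of $Q$ on $T_k$. Using the substitution $x=\cos\theta$ with $\theta=\arccos(x)\in[0,\pi]$, so that $T_k(x)=\cos(k\theta)$ and $\sqrt{1-x^2}=\sin\theta$, the chain rule gives $\px=-\frac{1}{\sin\theta}\,\p_\theta$, whence $Q=-\p_\theta$ as an operator in the variable $\theta$. Iterating, $Q^{2s}=\p_\theta^{2s}$, and since $\p_\theta^{2s}\cos(k\theta)=(-1)^s k^{2s}\cos(k\theta)$, each Chebyshev polynomial is an eigenfunction of the SSV operator:
\[
Q^{2s}T_k=(-1)^s k^{2s}\,T_k .
\]

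Substituting this identity into the SSV term and using the expansion $u_h^N=\sum_k \tilde{u}_{h,k}T_k$, the prefactor $(-1)^{s+1}(-1)^s=(-1)^{2s+1}=-1$ collapses, and the whole term reduces to $-\frac{\eps}{N^{2s-1}}\sum_k k^{2s}\,\tilde{u}_{h,k}\,T_k$; that is, in the frequency space the SSV contributes exactly $-\frac{\eps k^{2s}}{N^{2s-1}}\tilde{u}_{h,k}$ to the evolution of the $k$-th coefficient. Treating this dissipative contribution by itself over a single time step $\Delta t$ by operator splitting, the coefficient obeys the linear ODE $\frac{d}{dt}\tilde{u}_{h,k}=-\frac{\eps k^{2s}}{N^{2s-1}}\tilde{u}_{h,k}$, whose exact flow multiplies $\tilde{u}_{h,k}$ by $\exp\!\left(-\frac{\eps k^{2s}}{N^{2s-1}}\Delta t\right)$.

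Finally I would match this multiplier against the filter. With $p=2s$ the exponential filter~\eqref{eq:sigma} rescales the $k$-th coefficient by $\sigma(k/N)=\exp\!\left(-\beta(k/N)^{2s}\right)$. Equating exponents, $\beta\,k^{2s}/N^{2s}=\eps\,\Delta t\,k^{2s}/N^{2s-1}$ for every $k$, which forces $\beta=\eps N\Delta t$, establishing the claimed equivalence. The only genuinely substantive step is the eigenvalue computation $Q^{2s}T_k=(-1)^s k^{2s}T_k$: once $Q$ is recognized as $-\p_\theta$ in the angular variable, everything else is exponent-matching bookkeeping, so I expect no real obstacle beyond carefully tracking the sign $(-1)^{s+1}$ carried in front of the viscosity term and the per-time-step (splitting) reading of the word \emph{equivalent}.
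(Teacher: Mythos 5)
Your proposal is correct and follows essentially the same path as the paper: identify the Chebyshev polynomials as eigenfunctions of the SSV operator, split off the viscous step in time, solve it exactly in frequency space, and match the resulting multiplier $\exp\left(-\eps N \Delta t (k/N)^{2s}\right)$ with the filter $\sigma(k/N)$ to read off $\beta=\eps N\Delta t$, $p=2s$. The only cosmetic difference is that you obtain the eigenvalue identity $Q^{2s}T_k=(-1)^s k^{2s}T_k$ for all $s$ at once via the substitution $x=\cos\theta$ (so that $Q=-\partial_\theta$), whereas the paper computes $Q^2T_k=-k^2T_k$ directly and iterates; both are sound.
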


\begin{proof}
   Let start by examining the super spectral viscosity operator applied to the Chebyshev polynomial $T_k(x)$ for $s=1$:
   \begin{equation*}
       \begin{split}
    Q^2\left(T_k(x)\right) &= \sqrt{1-x^2} \frac{\partial}{\partial x} \left(\sqrt{1-x^2} \frac{\partial}{\partial x} T_k(x)\right)\\
           &=-k^2 \ T_k(x)
       \end{split}
   \end{equation*}

   This means that Chebyshev polynomials are the eigenfunctions of the operator $Q^2$ with eigenvalues $-k^2$.

   We have
   \begin{equation*}
     \begin{split}
         \frac{\eps (-1)^{s+1}}{N^{2s-1}} Q^{2s} u_h^N &= \frac{\eps (-1)^{s+1}}{N^{2s-1}} Q^{2s} \sum_{k=0}^N \tilde{u}_{h,k}(t) \ T_k(x)\\
         &=-\eps N \sum_{k=0}^N \left(\frac{k}{N}\right)^{2s} \tilde{u}_{h,k}(t) \ T_k(x).
     \end{split}  
   \end{equation*}

   We implement the SSV method via time splitting, where in the first step we solve
   \begin{equation}
    \label{eq:4}
    \frac{\partial}{\partial t} u_h^N + \frac{\partial}{\partial x} I_N\left(f_h\left(u_h^N\right)\right) =0
   \end{equation}
   and in the second step we solve
   \begin{equation}
       \label{eq:5}
       \frac{\partial}{\partial t} u_h^N = \frac{\eps(-1)^{s+1}}{N^{2s-1}} Q^{2s} u_h^N.
   \end{equation}
   Then, in the frequency space, the seocnd equation~\eqref{eq:5} can be written as
   \begin{equation*}
   \frac{d}{dt} \tilde{u}_{h,k}(t) = -\eps N \left(\frac{k}{N}\right)^{2s} \tilde{u}_{h,k}(t),
\end{equation*}
whose exact solution over one time step is
\[
\tilde{u}_{h,k}(t+\Delta t) = \tilde{u}_{h,k}(t) \exp\left(-\eps N \Delta t \left(\frac{k}{N}\right)^{2s}\right)
\]
Thus, the exact solution of the SSV split step is a filtered partial sum
\[
u_h^N(x,t)=\sum_{k=0}^N \sigma\left(\frac{k}{N}\right)\tilde{u}_{h,k}(t) T_k(x)
\]
with $\beta=\eps N \Delta t$ and $p=2s$.
\end{proof}

The reformulation of the filtered Chebyshev method by using Super Spectral Viscosity, allows us to prove the convergence of the semi-discrete scheme.

\subsection{Convergence of the semi-discrete scheme}

We consider the spectral approximation~\eqref{eq:ssvmethod} of~\eqref{eq:model}, with initial condition $u_{0,h}^N$, $h=1,2,3$. For simplicity, we focus on the case $s=1$, but the results of this section can be easily generalized to $s>1$.

Due to the Chebyshev approximation properties, we can assume
\begin{equation}
    \label{eq:assump1}
    \begin{split}
    &u_{0,h}^N \to u_{0,h}\quad\text{in}\quad L^p_{loc}(\Omega_h),\quad 1\le p <\infty,\quad\text{as}\quad N\to\infty,\\
    &\norm{u_{0,h}^N}_{L^\infty\left(\Omega_h\right)}\le \norm{u_{0,h}}_{L^\infty\left(\Omega_h\right)},\quad \norm{u_{0,h}^N}_{L^2\left(\Omega_h\right)}\le \norm{u_{0,h}}_{L^2\left(\Omega_h\right)},\quad N>0.
    \end{split}
\end{equation}

Additionally, the maximum principle for parabolic equations applied to~\eqref{eq:ssvmethod} ensures
\begin{equation}
    \label{eq:assump2}
    \norm{u_{h}^N}_{L^\infty\left(\Omega_h\times (0,\infty)\right)} \le \norm{u_{0,h}}_{L^\infty\left(\Omega_h\right)},\quad N>0,\quad h=1,2,3.
\end{equation}

We prove the following preliminary Lemmas.
\begin{lemma}
 Let $u_h^N$ be the solution of~\eqref{eq:ssvmethod}, for $h=1,2,3$, then, the following energy estimate holds
 \begin{equation}
     \label{eq:energyest}   \norm{u_h^N(\cdot,t)}_{L^2(\Omega)}^2 +  \frac{2\eps}{N} \int_0^t \norm{\sqrt{1-x^2}\frac{\partial}{\partial x} u_N(\cdot,s)}_{L^2(\Omega)}^2\,ds \le \norm{u_0}_{L^2(\Omega)}^2,\qquad t\ge0,\,N>0.
 \end{equation}
\end{lemma}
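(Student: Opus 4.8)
The plan is to run an energy argument directly on the super-spectral-viscosity formulation \eqref{eq:ssvmethod}. Since the dissipation appearing in \eqref{eq:energyest} is exactly the quantity generated by the operator $Q$, I read all the norms $\norm{\cdot}_{L^2(\Omega)}$ as the Chebyshev-weighted $L^2$ norms attached to
$$\langle f,g\rangle_w=\int_{-1}^1\frac{f(x)\,g(x)}{\sqrt{1-x^2}}\,dx,$$
for which the $T_k$ are orthogonal, $\langle T_k,T_m\rangle_w=\gamma_k\delta_{km}$, and Parseval gives $\norm{u_h^N}^2=\sum_{k=0}^N\gamma_k\tilde u_{h,k}^2$. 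I would pair \eqref{eq:ssvmethod} with $u_h^N$ in this inner product and sum over $h=1,2,3$. The time term produces $\tfrac12\frac{d}{dt}\sum_h\norm{u_h^N}^2$, so the task reduces to controlling the flux term and the viscosity term.

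For the viscosity term I would use the self-adjoint (Sturm--Liouville) form of the operator highlighted in the Proposition: since $\tfrac{1}{w}\,\partial_x\!\left(\sqrt{1-x^2}\,\partial_x u\right)=Q^2u$ and $Q^2T_k=-k^2T_k$, an integration by parts gives
$$\frac{\eps}{N}\langle Q^2u_h^N,u_h^N\rangle_w=\frac{\eps}{N}\int_{-1}^1 u_h^N\,\partial_x\!\left(\sqrt{1-x^2}\,\partial_x u_h^N\right)dx=-\frac{\eps}{N}\norm{\sqrt{1-x^2}\,\partial_x u_h^N}^2,$$
the boundary contributions vanishing because $\sqrt{1-x^2}=0$ at $x=\pm1$. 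Equivalently, via Parseval this equals $-\tfrac{\eps}{N}\sum_k\gamma_k k^2\tilde u_{h,k}^2$, which is precisely the dissipation norm in \eqref{eq:energyest}. This step is routine.

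The flux term $\sum_h\langle\partial_x I_N(f_h(u_h^N)),u_h^N\rangle_w$ is where I expect the real difficulty. Two features make it delicate: the interpolation operator $I_N$ introduces aliasing, so $I_N(f_h(u_h^N))$ is only an approximate conservation flux, and the Chebyshev weight blows up at $x=\pm1$, so a naive continuous integration by parts leaves boundary terms that are not manifestly finite. I would therefore work with the Gauss--Lobatto quadrature inner product (which agrees with $\langle\cdot,\cdot\rangle_w$ on polynomials up to the exactness degree) and replace differentiation by the matrix $D$, turning the flux term into a summation-by-parts identity whose only surviving pieces are endpoint evaluations on each edge. Summing the three edge identities, the junction fluxes telescope and cancel by virtue of the imposed values \eqref{eq:bcphysical} together with the conservation/Godunov balance \eqref{eq:consjunc}--\eqref{eq:BCcriterion}, while the monotonicity of the Godunov flux ensures the remaining contributions do not feed energy into the system. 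Bounding the aliasing term uniformly in $N$ is the crux of the whole estimate.

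Granting that the net flux contribution is non-positive, the three terms assemble into
$$\frac12\frac{d}{dt}\sum_{h}\norm{u_h^N(\cdot,t)}^2+\frac{\eps}{N}\sum_h\norm{\sqrt{1-x^2}\,\partial_x u_h^N(\cdot,t)}^2\le0.$$
Integrating from $0$ to $t$ and multiplying by $2$ yields \eqref{eq:energyest}, the initial energy being controlled by $\norm{u_0}^2$ through the $L^2$ stability assumption in \eqref{eq:assump1}. Because the flux term has been absorbed with the correct sign, no Gr\"onwall factor appears and the constant on the right-hand side is exactly $1$.
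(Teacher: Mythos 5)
Your treatment of the viscosity term is correct as far as it goes, but note that you have silently re-read every norm in \eqref{eq:energyest} as a Chebyshev-weighted norm: in your framework the dissipation functional is $\int_{-1}^1\sqrt{1-x^2}\,\bigl(\partial_x u_h^N\bigr)^2dx$, whereas the paper works in the unweighted $L^2$ space, where it is $\int_{-1}^1(1-x^2)\bigl(\partial_x u_h^N\bigr)^2dx$; so you are in fact proving a differently-normed statement. The genuine gap, however, is the flux term. You write ``granting that the net flux contribution is non-positive'' and never establish it, yet this is exactly where the content of the lemma lies: on a single bounded edge with prescribed boundary data the $L^2$ energy is in general \emph{not} non-increasing, since boundary and junction fluxes can feed energy into an edge, so this step cannot be assumed away. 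Worse, the mechanism you propose (summation by parts on the Gauss--Lobatto grid, telescoping of the junction fluxes via \eqref{eq:consjunc}--\eqref{eq:BCcriterion}, monotonicity of the Godunov flux, and a uniform-in-$N$ aliasing bound) would at best control the \emph{sum} $\sum_h\norm{u_h^N(\cdot,t)}^2$ over the three edges, whereas the lemma asserts a separate estimate for each $h$. Even if you completed the missing aliasing and monotonicity arguments, you would land on a weaker, summed inequality rather than \eqref{eq:energyest}.

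The paper's proof disposes of the flux term edge by edge, with no junction information at all: it drops the interpolation operator $I_N$ and rewrites $u_h^N f_h'\bigl(u_h^N\bigr)\partial_x u_h^N$ as an exact spatial derivative, $\partial_x\bigl(\int_0^{u_h^N}s\,f_h'(s)\,ds\bigr)$, so that its integral over $(-1,1)$ reduces to endpoint evaluations, which are then discarded (the paper forces them to vanish by inserting a factor $\sqrt{1-x^2}$ inside the derivative, which is where its own argument is loose). Whatever one thinks of that step, it is a purely local, per-edge identity requiring neither Godunov monotonicity, nor junction telescoping, nor any aliasing estimate. Your plan replaces the one computation the paper actually performs with a considerably harder program that you do not carry out; as written, the proposal does not prove the lemma.
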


\begin{proof}
    We have that
    \begin{align*}
        \frac{d}{dt} \int_{-1}^1\frac{\left(u^N_h\right)^2}{2}dx &=\int_{-1}^1 u_h^N\frac{\partial }{\partial t} u_h^N dx\\
        &=\frac{\eps}{N}\int_{-1}^1 u_h^N \left(\sqrt{1-x^2}\frac{\partial}{\partial x}\right)^2 u_h^N dx - \int_{-1}^1 u_h^N f'_h\left(u_h^N\right)\frac{\partial }{\partial x} u_h^N\\
        &=-\frac{\eps}{N}\int_{-1}^1 \left(\sqrt{1-x^2}\right)^2\left(\frac{\partial}{\partial x} u_h^N\right)^2 dx \\
				&\quad- \underbrace{\int_{-1}^1 \frac{\partial}{\partial x}\left(\sqrt{1-x^2} \int_0^{u_h^N(x,t)} s f'_h(s) ds\right) dx}_{{}=0}\\
        &=-\frac{\eps}{N}\int_{-1}^1 \left(\sqrt{1-x^2}\right)^2\left(\frac{\partial}{\partial x} u_h^N\right)^2 dx,
    \end{align*}
    that is
    \begin{equation}
    \label{eq:1}
    \frac{d}{dt} \norm{u^N_h(\cdot,t)}_{L^2(\Omega_h)}^2+\frac{2\eps}{N}\norm{\sqrt{1-x^2}\frac{\partial }{\partial x }u_h^N(\cdot,t)}_{L^2(\Omega_h)}^2 =0.
    \end{equation}
    Integrating over $(0,t)$ equation~\eqref{eq:1} and using~\eqref{eq:assump1}, we get the claim.
\end{proof}

We can now prove the convergence of the semi-discrete scheme.

\begin{theorem}
\label{th:conv}
Consider the spectral approximation~\eqref{eq:ssvmethod} of~\eqref{eq:model}. 
%{\blue f satisfy assumptions ... and the initial condition is $u_0\in L^2(\Omega)\cap L^\infty(\Omega)$}. 
If its solution $u_h^N$ is uniformly bounded in $L^\infty(\Omega_h\times[0,+\infty])$, for $h=1,2,3$, then there exists a subsequence $\left\{u_h^{N_m}\right\}_m$ and a function $u_h\in L^\infty(\Omega_h\times(0,+\infty))$ such that
\begin{equation}
\label{eq:convLp}
u_h^{N_m}\to u_h \qquad \text{in}\quad L^p_{loc}(\Omega_h),\quad 1\le p<\infty\quad\text{and a.e. in}\quad\Omega_h.
\end{equation}
\end{theorem}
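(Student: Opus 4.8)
The plan is to apply the compensated compactness framework (the div-curl lemma together with Tartar's characterization of Young measures for genuinely nonlinear scalar conservation laws). The strategy rests on three ingredients: an $L^\infty$ bound (already assumed in the hypothesis), the energy estimate from the preceding Lemma, and a control of the entropy production that allows us to verify the $H^{-1}_{loc}$ compactness of entropy dissipation measures. Since the flux $f_h$ is nonlinear non-degenerate (assumption 4, i.e. $f_h'$ is not affine on any subinterval), Tartar's theorem will force the Young measure to reduce to a Dirac mass, which upgrades weak-$*$ convergence to strong $L^p_{loc}$ convergence.

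\medskip

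First I would fix an index $h$ and, using the uniform $L^\infty$ bound, extract a subsequence $u_h^{N_m}$ that generates a Young measure $\nu_{x,t}$ and converges weakly-$*$ in $L^\infty$ to $u_h$. The genuine work is to show that for every smooth convex entropy pair $(\eta,q)$ with $q' = \eta' f_h'$, the entropy production
\begin{equation*}
\mu_m := \partial_t \eta(u_h^{N_m}) + \partial_x q(u_h^{N_m})
\end{equation*}
is precompact in $H^{-1}_{loc}(\Omega_h \times (0,\infty))$. To obtain this I would multiply the SSV equation~\eqref{eq:ssvmethod} (with $s=1$) by $\eta'(u_h^{N_m})$; the viscous term on the right produces, after integration by parts, a negative-definite contribution of the form $-\tfrac{\eps}{N}\eta''(u_h^N)(\sqrt{1-x^2}\,\partial_x u_h^N)^2$ plus commutator terms coming from the differential operator $Q^2 = \sqrt{1-x^2}\,\partial_x(\sqrt{1-x^2}\,\partial_x\,\cdot\,)$ and from the spectral interpolation error $f_h(u_h^N) - I_N f_h(u_h^N)$. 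The energy estimate~\eqref{eq:energyest} furnishes exactly the uniform bound $\tfrac{\eps}{N}\int_0^t \|\sqrt{1-x^2}\,\partial_x u_h^N\|_{L^2}^2\,ds \le C$ needed to control the dissipation term, so that the viscous part tends to zero in $W^{-1,2}_{loc}$ while the entropy measures themselves stay bounded in $W^{-1,\infty}_{loc}$; Murat's lemma (the interpolation between a bounded set in $W^{-1,\infty}$ and a compact set in $W^{-1,2}$) then yields the desired $H^{-1}_{loc}$ compactness.

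\medskip

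With $H^{-1}_{loc}$ compactness of the entropy dissipation in hand, the div-curl lemma applied to the pairs $(\eta_1(u^{N_m}), q_1(u^{N_m}))$ and $(q_2(u^{N_m}), -\eta_2(u^{N_m}))$ gives the commutation relation
\begin{equation*}
\langle \nu_{x,t}, \eta_1 q_2 - \eta_2 q_1 \rangle = \langle \nu_{x,t},\eta_1\rangle\langle\nu_{x,t},q_2\rangle - \langle\nu_{x,t},\eta_2\rangle\langle\nu_{x,t},q_1\rangle
\end{equation*}
for all entropy pairs. Invoking Tartar's argument, and crucially the nonlinearity condition (assumption 4) that prevents $f_h'$ from being constant on any interval, one concludes that the support of $\nu_{x,t}$ collapses to a single point for a.e. $(x,t)$, i.e. $\nu_{x,t} = \delta_{u_h(x,t)}$. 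This reduction of the Young measure to a Dirac mass is precisely what converts weak-$*$ convergence into strong convergence, giving $u_h^{N_m} \to u_h$ in $L^p_{loc}$ for $1\le p<\infty$ and a.e. in $\Omega_h$, as claimed.

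\medskip

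The main obstacle, and the step demanding the most care, is verifying the $H^{-1}_{loc}$ precompactness of the entropy production \emph{for the spectral scheme} rather than for a classical vanishing-viscosity approximation. Two spectral-specific error terms must be shown to be negligible: the aliasing/interpolation commutator $\partial_x\big(I_N f_h(u_h^N) - f_h(u_h^N)\big)$, which requires a spectral consistency estimate exploiting the boundedness of $u_h^N$ and standard Chebyshev approximation theory, and the commutator between $\eta'(u_h^N)$ and the non-constant-coefficient operator $Q^2$, whose weight $\sqrt{1-x^2}$ degenerates at the boundary points $x=\pm 1$. I expect this boundary degeneracy, together with the presence of the junction coupling~\eqref{eq:discmethodboundary}, to be the delicate technical heart of the proof, since the compactness argument is naturally local (hence the $L^p_{loc}$ statement) and the behavior of the artificial viscosity near $x=\pm 1$ must be handled so that it does not destroy the dissipation control supplied by~\eqref{eq:energyest}.
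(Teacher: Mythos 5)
Your proposal follows essentially the same route as the paper: the entropy production of the SSV approximation is split into a divergence-form term driven to zero in $H^{-1}$ by the energy estimate~\eqref{eq:energyest} and a dissipation term bounded in the space of measures, after which Murat's lemma and Tartar's compensated compactness theorem (with the non-degeneracy of $f_h$ collapsing the Young measure to a Dirac mass) upgrade weak-$\star$ to strong $L^p_{loc}$ convergence. If anything, you are more careful than the paper itself, which silently drops both the aliasing term $\partial_x\bigl(I_N f_h(u_h^N)-f_h(u_h^N)\bigr)$ and the first-order commutator produced by the variable-coefficient operator $Q^2$ --- precisely the two spectral-specific issues you flag as the delicate technical points.
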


\begin{proof}%[Theorem~\ref{th:conv}]

We want to prove the result by using the Compensated Compactness Theorem~\cite{Tartar}.

We fix $T>0$ and the entropy $\eta_h\in\mathcal{C}^2(\R)$ 
%defined as $\eta_h(u_h^N)=\left|u_h^N-\textcolor{blue}{c}\right|$, 
with flux $q_h(u_h^N,c)$%$=sign\left(u_h^N-\textcolor{blue}{c}\right)\left(f_h(u_h^N)-f_h(\textcolor{blue}{c})\right)$, 
such that $q'_h=f'_h\eta_h'$, for $h=1,2,3$.

We want to prove that 
\begin{equation}
\label{eq:etacompact}
\left\{\frac{\partial}{\partial t} \eta_h\left(u_h^N\right) + \frac{\partial }{\partial x} q_h\left(u_h^N\right)\right\}_N \quad \text{is compact in}\quad H^{-1}\left(\R\times (0,T)\right),\qquad\text{for}\quad h=1,2,3.
\end{equation}

We observe that
\begin{equation}
\label{eq:2}
\begin{split}
    \frac{\partial}{\partial t} \eta_h\left(u_h^N\right) + \frac{\partial }{\partial x} q_h\left(u_h^N\right)&=\eta'_h\left(u_h^N\right)\left(\frac{\partial }{\partial t}u_h^N + \frac{\partial}{\partial x} f_h\left(u_h^N\right)\right)\\
    &=\frac{\eps}{N}\left(\sqrt{1-x^2}\frac{\partial}{\partial x}\right)^2 \left(u_h^N\right)\\
    &=\frac{\eps}{N} \left(\sqrt{1-x^2}\right)^2 \frac{\partial^2}{\partial x^2} \left(\eta_h\left(u_h^N\right)\right)\\
		&\quad - \frac{\eps}{N} \left(\sqrt{1-x^2}\right)^2\eta''_h\left(u_h^N\right)\left(\frac{\partial}{\partial x} u_h^N\right)^2.
\end{split}
\end{equation}

Since $\sqrt{1-x^2}\le 1$, using~\eqref{eq:energyest} and 
\[
\frac{\eps}{N}\frac{\partial^2}{\partial x^2} \left(\eta_h\left(u_h^N\right)\right) = \frac{\partial}{\partial x} \left(\frac{\eps}{N} \eta'_h\left(u_h^N\right)\frac{\partial}{\partial x}u_h^N\right),
\]
we obtain
\begin{equation*}
    \begin{split}
        \norm{\frac{\eps}{N}\eta'_h\left(u_h^N\right)\frac{\partial}{\partial x}u_h^N}^2_{L^2\left(\Omega_h\times (0,T)\right)}&\le \frac{\eps^2}{N^2}\norm{\eta'_h}^2_{L^\infty\left(-\norm{u_{0,h}}_{L^\infty(\Omega_h)},\norm{u_0}_{L^\infty(\Omega_h)}\right)} \int_0^T \norm{\frac{\partial}{\partial x} u_h^N (\cdot,s)}_{L^2(\Omega_h)} ds\\
        &\le \frac{\eps}{N} \norm{\eta'_h}^2_{L^\infty\left(-\norm{u_{0,h}}_{L^\infty(\Omega_h)},\norm{u_0}_{L^\infty(\Omega_h)}\right)} \norm{u_{h,0}}_{L^2(\Omega_h)}^2 \to 0,
    \end{split}
\end{equation*}
therefore,
\begin{equation}
    \label{eq:firsttermcompact}
    \left\{\frac{\eps}{N} \left(\sqrt{1-x^2}\right)^2 \frac{\partial^2}{\partial x^2} \left(\eta_h\left(u_h^N\right)\right)\right\}_N\quad\text{is compact in}\quad H^{-1}\left(\Omega_h\times(0,T)\right).
\end{equation}

Moreover,
\begin{align*}
    %\begin{split}
     &   \norm{\frac{\eps}{N} \left(\sqrt{1-x^2}\right)^2\eta''_h\left(u_h^N\right)\left(\frac{\partial}{\partial x} u_h^N\right)^2}_{L^1(\Omega_h\times (0,T))}\\
			&	\qquad \le \frac{\eps}{N}\norm{\eta''_h}_{L^\infty\left(-\norm{u_{0,h}}_{L^\infty(\Omega_h)},\norm{u_0}_{L^\infty(\Omega_h)}\right)} \int_0^T \norm{\frac{\partial}{\partial x} u_h^N (\cdot,s)}_{L^2(\Omega_h)}^2 ds\\
       &\qquad +\frac{1}{2} \norm{\eta''_h}_{L^\infty\left(-\norm{u_{0,h}}_{L^\infty(\Omega_h)},\norm{u_0}_{L^\infty(\Omega_h)}\right)} \norm{u_{h,0}}_{L^2(\Omega_h)}^2, 
    %\end{split}
\end{align*}
thus 
\begin{equation}
    \label{eq:secondtermcompact}
    \left\{\frac{\eps}{N} \left(\sqrt{1-x^2}\right)^2\eta''_h\left(u_h^N\right)\left(\frac{\partial}{\partial x} u_h^N\right)^2\right\}_N\quad\text{is compact in}\quad\mathcal{M}\left(\Omega_h\times (0,T)\right),
\end{equation}
where $\mathcal{M}\left(\Omega_h\times (0,T)\right)$ denotes the space of measures on $\Omega_h\times (0,T)$.

Thanks to~\eqref{eq:firsttermcompact} and~\eqref{eq:secondtermcompact}, we can use Murat's Lemma and find that
\begin{equation}
    \label{eq:compactnesseta}
    \left\{\frac{\eps}{N} \left(\sqrt{1-x^2}\right)^2 \frac{\partial^2}{\partial x^2} \left(\eta_h\left(u_h^N\right)\right) - \frac{\eps}{N} \left(\sqrt{1-x^2}\right)^2\eta''_h\left(u_h^N\right)\left(\frac{\partial}{\partial x} u_h^N\right)^2\right\}\,
\end{equation}
is compact in $H^{-1}\left(\Omega_h\times (0,T)\right)$, providing~\eqref{eq:etacompact}.

Hence, we are able to apply the Compensated Compactness Theorem, finding that there exists a subsequence $\{u_h^{N_m}\}_m$ and a function $u_h\in L^\infty\left(\Omega_h\times (0,\infty)\right)$ such that
\begin{equation}
    \label{eq:thesis}
    u_h^{N_m} \to u_h\quad \text{in}\quad L^p_{loc}\left(\Omega_h\times (0,\infty)\right),\quad 1\le p<\infty,\quad\text{and a.e. in}\quad \Omega_h. 
\end{equation}

Now, we have to show that $u$ is an entropy weak solution to~\eqref{eq:model}. Let $\varphi\in\mathcal{C}^{\infty}(\R^2)$ be a positive test function with compact support.

We have to prove that
\begin{equation}
    \label{eq:entropythesis}
    \int_0^\infty \int_{\Omega_h} \left(\eta_h\left(u_h\right)\frac{\partial}{\partial t}\varphi +q_h\left(u_h\right)\frac{\partial}{\partial x}\varphi\right) dtdx + \int_{\Omega_h} \eta_h\left(u_0(x)\right) \varphi(x,0) dx \ge 0.
\end{equation}

From~\eqref{eq:2}, we have
\[
\frac{\partial}{\partial t}\eta_h\left(u_h^{N_m}\right) + \frac{\partial}{\partial x} q_h\left(u_h^{N_m}\right) \le \frac{\eps}{N_m} \frac{\partial^2}{\partial x^2} \left(\eta_h\left(u_h^{N_m}\right)\right).
\]
Multiplying by $\varphi$ and integrating over $\Omega_h\times (0,\infty)$ we have that
\begin{equation}
    \label{eq:3}
    \begin{split}
        \int_0^\infty \int_{\Omega_h} \left(\eta_h\left(u_h^{N_m}\right)\frac{\partial}{\partial t}\varphi +q_h\left(u_h^{N_m}\right)\frac{\partial}{\partial x}\varphi\right) dtdx &+ \int_{\Omega_h} \eta_h\left(u_0^{N_m}(x)\right) \varphi(x,0) dx\\
        &+ \frac{\eps}{N_m} \int_0^{\infty}\int_{\Omega_h} \eta_h\left(u_h^{N_m}\right)\frac{\partial^2}{\partial x^2} \varphi dtdx \ge 0.
    \end{split}
\end{equation}
Thus, equation~\eqref{eq:entropythesis} follows from~\eqref{eq:assump1}, \eqref{eq:assump2}, \eqref{eq:3} and the dominated convergence theorem.
\end{proof}

%{\blue Finally we have to prove that this solution is an admissible solution for the network.}

\section{Fully-discrete scheme}
\label{sec:fully}
%\subsection{A fully-discrete 2D Chebyshev collocation method}

Once the Cauchy problem on network~\eqref{eq:model} has been discretized in space, we need to integrate in time the obtained system of ODEs~\eqref{eq:discmethodinterior}. In this paper, we construct the fully-discrete scheme by implementing the  Midpoint method.

We fix the time step $\Delta t>0$ satisfying the CFL condition
\begin{equation}
    \label{eq:CFL}
    \Delta t \max_h \left(L_h\right)\le \frac{\min_{j=0,\dots,N-1} \left|x_{j+1}-x_j\right|}{2},
\end{equation}
and for $s\in\N$ we define the time mesh $t^s=s\Delta t$. At each time step $t^s$, $\tilde{u}_{h,k}^{\, s}$ represents an approximation of the solution $\tilde{u}_{h,k}(t^s)$, for every $k=0,\dots,N$ and $h=1,2,3$.

Let us introduce the following notation for the second term of the left hand side of~\eqref{eq:filterdiscmethodinterior}
\begin{equation}
\label{eq:discflux}
F_{h,k}^\sigma\left(t^s,\tilde{u}_h^\sigma(t^s)\right) = \sum_{n=0}^N\sum_{m=0}^N\sum_{i=0}^N P_{knm}D_{mi}\tilde{f'}_{h,n}^{\,\sigma}(t^s)\ \tilde{u}_{h,i}^{\,\sigma}(t^s)=0, \qquad k=0,\dots,N,\qquad h=1,2,3.
\end{equation}

The fully-filtered discrete scheme is given by
\begin{equation}
\label{eq:fullymethod}
\tilde{u}_{h,k}^{\,s+1} = \tilde{u}_{h,k}^{\,s} - \Delta t F_{h,k} \left(t^s+\frac{\Delta t}{2}, \tilde{u}_{h} \left(t^s+\frac{\Delta t}{2}\right)\right),\qquad k=0,\dots, N,\qquad h=1,2,3.
\end{equation}

%{\blue We prove the following result.}

\subsection{A 2D Chebyshev collocation method for the fully discretized model}
\label{sec:fully-2D}

%\begin{rmk}
In order to obtain a second order scheme in time, one can use a spectral Chebyshev method both for space and time discretization, as shown in~\cite{LPcheby2022}.

The method consists in looking for an approximation of the solution on the whole network in the form of finite combination of the product of Chebyshev polynomials in space and time variables.

More precisely, for simplify the notation, we assume that the time interval under consideration is $[-1,1]$, so that the initial condition corresponds to $t=-1$. We consider as full domain of computation the product $\tilde{\Omega}=\Omega\times[-1,1]$ and we look for $u^N_h(x,t)\approx u_h(x,t)$, $h=1,2,3$ given by
\begin{equation}
\label{eq:chebser2d}
u^N_h(x,t)=\sum_{j=0}^{N_x}\sum_{k=0}^{N_t} \tilde{u}_{h,jk}\ T_j(x)\ T_k(t)
\end{equation}
where $N_x$ and $N_t$ represent the total number of collocation points in space and time, respectively, and $N=(N_x,N_t)$.

The coefficients $\tilde{u}_{h,jk}$, $j=0,\dots,N_x$
, $k=0,\dots,N_t$, and $h=1,2,3$, are the Chebyshev coefficients of the discrete Chebyshev expansion and when the grid points are the Chebyshev Gauss-Lobatto points $(x_n,t_m)=(\cos(n\pi/N_x),\cos(m\pi/N_t))$, then their expression is given by
\begin{equation}
\label{eq:fcoef}
\tilde{u}_{h,jk}=\frac{1}{\gamma_j\gamma_k}\sum_{n=0}^{N_x}\sum_{m=0}^{N_t} u_h(x_n,t_m)\ T_j(x_n)T_k(t_m)w_n w_m,
\end{equation}
where $\gamma$ and $w$ are defined in~\eqref{eq:gamma} and~\eqref{eq:w}, respectively.

For the purpose of our work, we show here the expression of the expansion of the first order derivative in time and in space:
\begin{equation}
\label{eq:ptchebser}
\begin{split}
\frac{\partial }{\partial t}u^N_h(x,t)&=\sum_{j=0}^{N_x}\sum_{k=0}^{N_t}\sum_{\ell=0}^{N_t} D_{k\ell}^{(t)}\ \tilde{u}_{h,j\ell}\ T_j(x)\ T_k(t),\\
\frac{\partial }{\partial x}u^N_h(x,t)&=\sum_{j=0}^{N_x}\sum_{k=0}^{N_t}\sum_{i=0}^{N_x} D_{ji}^{(x)}\ \tilde{u}_{h,ik}\ T_j(x)\ T_k(t)
\end{split}
\end{equation}
where $D$ is the derivative matrix, 
%with $\hat{D}=(\hat{D}_{k\ell})=\sum_{j=0}^N D_{kj}D_{j\ell}$, 
and the superscripts $(t)$ and $(x)$ in such a matrix denote the differentiation with respect to the temporal or spatial coordinates, respectively.

Moreover, it is possible to find a compact expression for the product of two functions as linear combination of Chebyshev polynomials in the same way as in Lemma~\ref{lm:product}. We have that the $ij$-th Chebyshev coefficient of the product between two functions $f$ and $g$ is given by
\begin{equation}
\label{eq:cheb2dproduct}
\tilde{\left(fg\right)}_{ij}=trace\left(P^{(x)}_i\tilde{f}\ \tilde{g}^{\, T}\ P^{(t)}_j\right),\qquad i=0,\dots, N_x,\quad j=0,\dots,N_t+1,
\end{equation}
where $\tilde{f}$ and $\tilde{g}$ are the $(N_x+1)\times (N_t+1)$ matrix of Chebyshev coefficients of $f$ and $g$, while $P^{(x)}$ and $P^{(t)}$ are the $(N_x+1)\times(N_x+1)\times(N_x+1)$ and $(N_t+1)\times(N_t+1)\times(N_t+1)$ tensors of Lemma~\ref{lm:product}, respectively.

The advantage of implementing such scheme is related to the fact that even in this case we can benefit of the implementation of the 2D-FFT to compute the Chebyshev coefficients $\tilde{u}_{h,jk}$ in~\eqref{eq:chebser2d}.

If we replace $u_{h}(x,t)$ with its Chebyshev approximation $u_h^N(x,t)$, we find
\begin{align*}
    \label{eq:2Dapprox}
    0=&\frac{\partial}{\partial t} u_h^N(x,t) + \frac{\partial }{\partial x} I_N\left(f_h(u_h^N(x,t))\right)\\
    =&\sum_{j=0}^{N_x}\sum_{k=0}^{N_t}\left(\sum_{\ell=0}^{N_t} D_{k\ell}^{(t)}\ \tilde{u}_{h,j\ell} + a_{h,jk}\right)\ T_j(x)\ T_k(t),
\end{align*}
where for every $h=1,2,3$, $j=0,\dots,N_x$ and $k=0,\dots,N_t$, $a_{h,jk}$ is given by
\begin{equation}
    \label{eq:a}
    a_{h,jk}=trace\left(P_j^{(x)}\tilde{(f')}_h\left(\sum_{\ell =0}^{N_x} D^{(x)}_{j\ell}\tilde{u}_{h,\ell k}\right)^{\,T}\ P_k^{(t)}\right).
\end{equation}

The unknowns $\tilde{u}_{h,jk}$ have to satisfy the conservation law in the interior of the integration domain with special care for imposing the initial and boundary conditions.
We have
\begin{align*}
\label{eq:initcond2D}
u^N_{h}(x,-1)&=\sum_{j=0}^{N_x}\sum_{k=0}^{N_t} (-1)^k \tilde{u}_{h,jk} T_j(x)\\
&=\sum_{j=0}^{N_x} \tilde{u_0}_{h,j} T_j(x)\qquad\qquad\qquad \qquad\qquad \qquad \qquad\qquad h=1,2,3.\\
&=u_{h,0}(x)
\end{align*}
The boundary conditions for the incoming edges can be obtained by solving the system
\begin{equation}
\label{eq:BCincom2D}
\sum_{j=0}^{N_x} \tilde{u}_{h,jk} = \tilde{(u_b)}_k\qquad h=1,2,\quad  k=0,\dots,N_t.
\end{equation}
While the boundary condition for the outgoing edge writes
\begin{equation}
\label{eq:BCoutg2D}
\sum_{j=0}^{N_x} (-1)^k \ \tilde{u}_{h,jk} = \tilde{(u_b)}_k\qquad h=3,\quad  k=0,\dots,N_t.
\end{equation}

We can notice that, in order to find the solution on the whole network, we need to impose the boundary condition for every time step $t_k$, $k=0,\dots,N_t$. However, the proposed Riemann solver used to select a unique weak solution to~\eqref{eq:cl}, requires to compute the boundary values implicitly at each iteration. So, in our analytical framework, a 2D Chebyshev spectral method is not suitable to get the admissible weak solution.

Such scheme can be applied successfully to each edge, but it needs a further analytical investigation to be couple with a different transmission condition. This represents an ongoing study, which will be presented in a future work.
%\end{rmk}

\section{Numerical Simulations}\label{sec:numericalSimulations}

In this section we present some simulations in order to show the properties of the proposed filtered Chebyshev spectral method applied to the network. 

More in details, we make a validation of the scheme by comparing the approximated solution provided by the method with an exact solution on network computed explicitly in~\cite{Pellegrino}. Then, we compare the filtered Chebyshev scheme with the finite volume scheme proposed in~\cite{Pellegrino} in terms of CPU cost.

Finally, in order to verify that, at least far from the junction, even 2D-Chebyshev method constructed in~\ref{sec:fully} performs well, in the framework of hyperbolic conservation laws.

\subsection{Validation of the scheme on network}
\label{sec:validation}

We fix $N>0$ and discretize $\Omega=[-1,1]^3$ by using the CGL points. The validation of the filtered Chebyshev method on network is made by comparing the obtained approximated solution with the explicit solution computed in~\cite{Pellegrino}, with flux $f_h(u) = u\left(1-u\right)$, for every $h=1,2,3$, and with initial condition
\[
u_{1,0}(x) = \chi_{[-1/2,0]}(x),\quad u_{2,0}(x)=\frac{3}{4} \chi_{[-1/4,0]}(x),\quad u_{3,0}(x)=0.
\]

Figure~\ref{fig:compare} compare the profile of the numerical solution on each edge with the exact one, showing a good agreement and the capability of the spectral method to capture well the shock without showing the appearance of spurious oscillations near discontinuities. The result is also confirmed in terms of convergence rate.

We introduce the relative $L^1$-error on network as follows
\[
E^s_{L^1}=\frac{\sum_{h=1}^3 \sum_{j=0}^N \left|u^N_h(x_j,t^s)-u^\ast_h(x_j,t^s)\right|}{\sum_{h=1}^3\sum_{j=0}^N \left|u^\ast(x_j,t^s)\right|},
\]
where $u^\ast$ is the reference solution.

%\textcolor{red}{We notice that finding an exact solution of a non-linear problem is a not trivial issue. In this work we determine $u^\ast$ using our method with a finer mesh.}

Table~\ref{tab:error} shows the relative error $E^{s}_{L^1}$ corresponding to the incoming edges of the network for different values of the total number of mesh points $N$ at time $t^s=1/6$ and the associated convergence rate. The table shows also a comparison between the error and the convergence rate obtained with the finite volume scheme (FVS) proposed in~\cite{Pellegrino}.

We can conclude that thanks to spectral method we can gain one order of accuracy, even in presence of shocks in the profile of the solution.

\begin{figure}
 \centering
 \subfloat[][Profile of the solution on the first incoming edge for $t=1/6$.]
 {\includegraphics[width=.3245\textwidth]{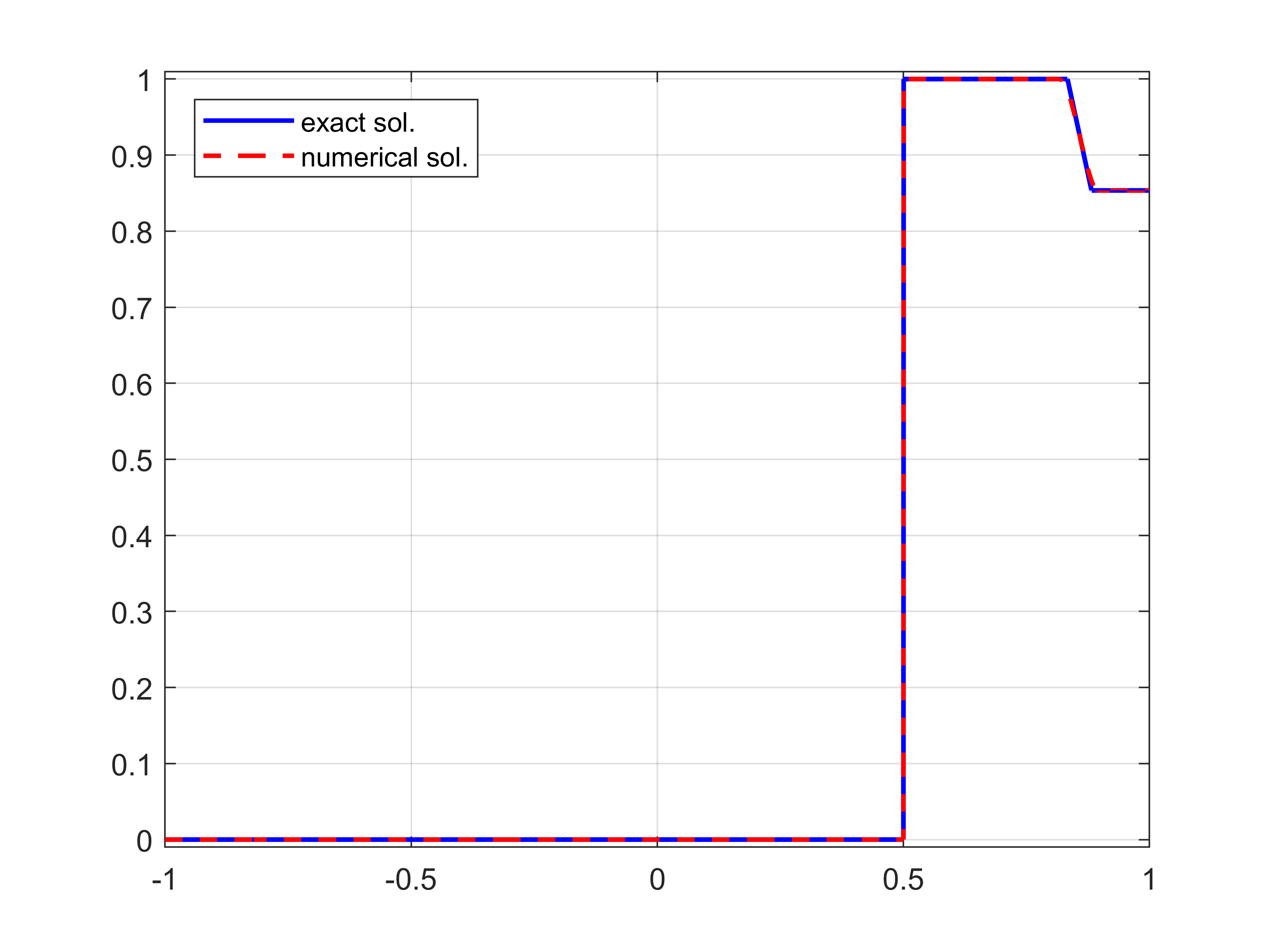}\label{fig:compare_delta0.15}}\,
 \subfloat[][Profile of the solution on the second incoming edge for $t=1/6$.]
 {\includegraphics[width=.3245\textwidth]{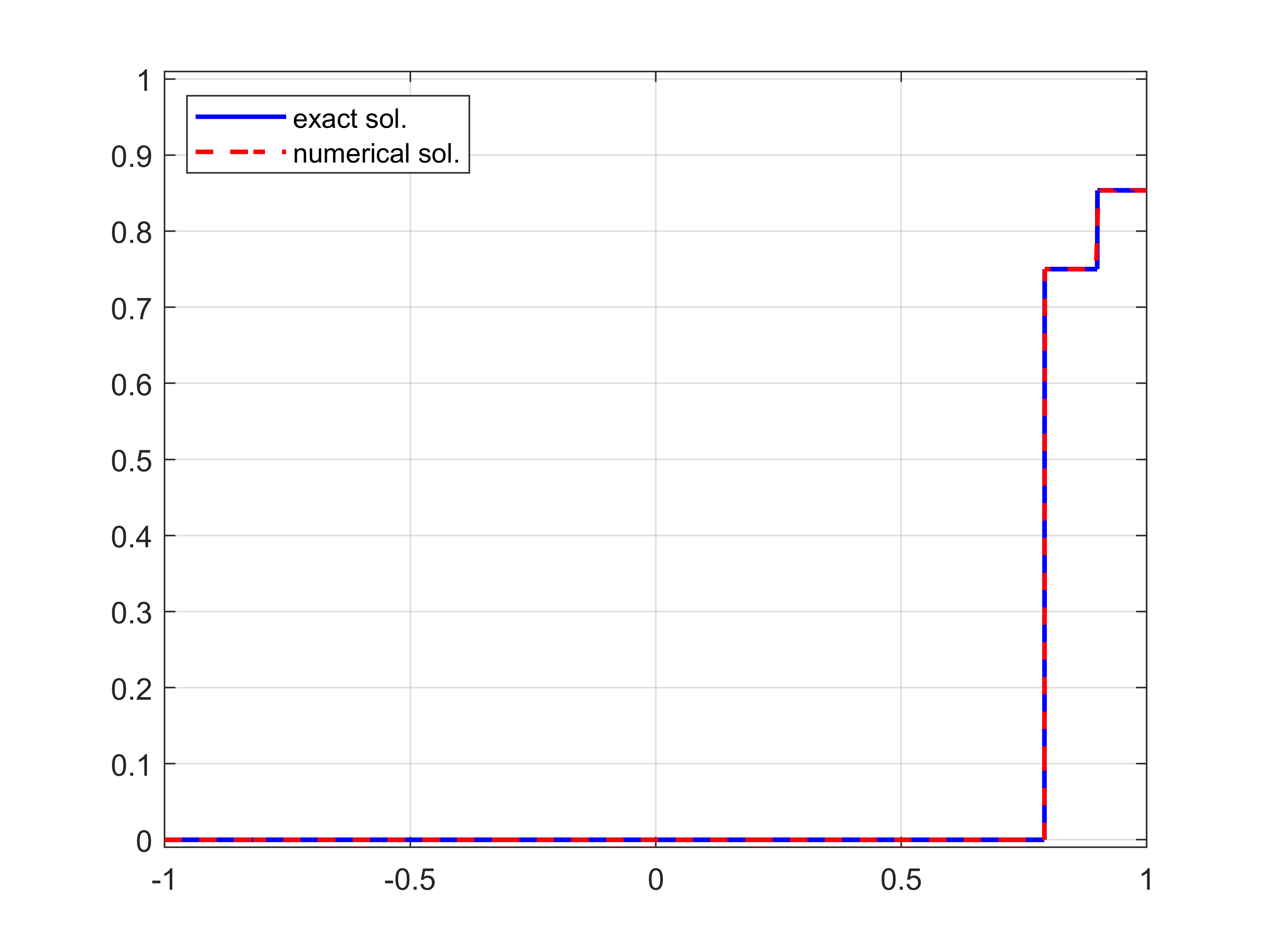}\label{fig:compare_delta0.3}} \,
 \subfloat[][Profile of the solution on the outgoing edge for $t=1/6$.]
 {\includegraphics[width=.3245\textwidth]{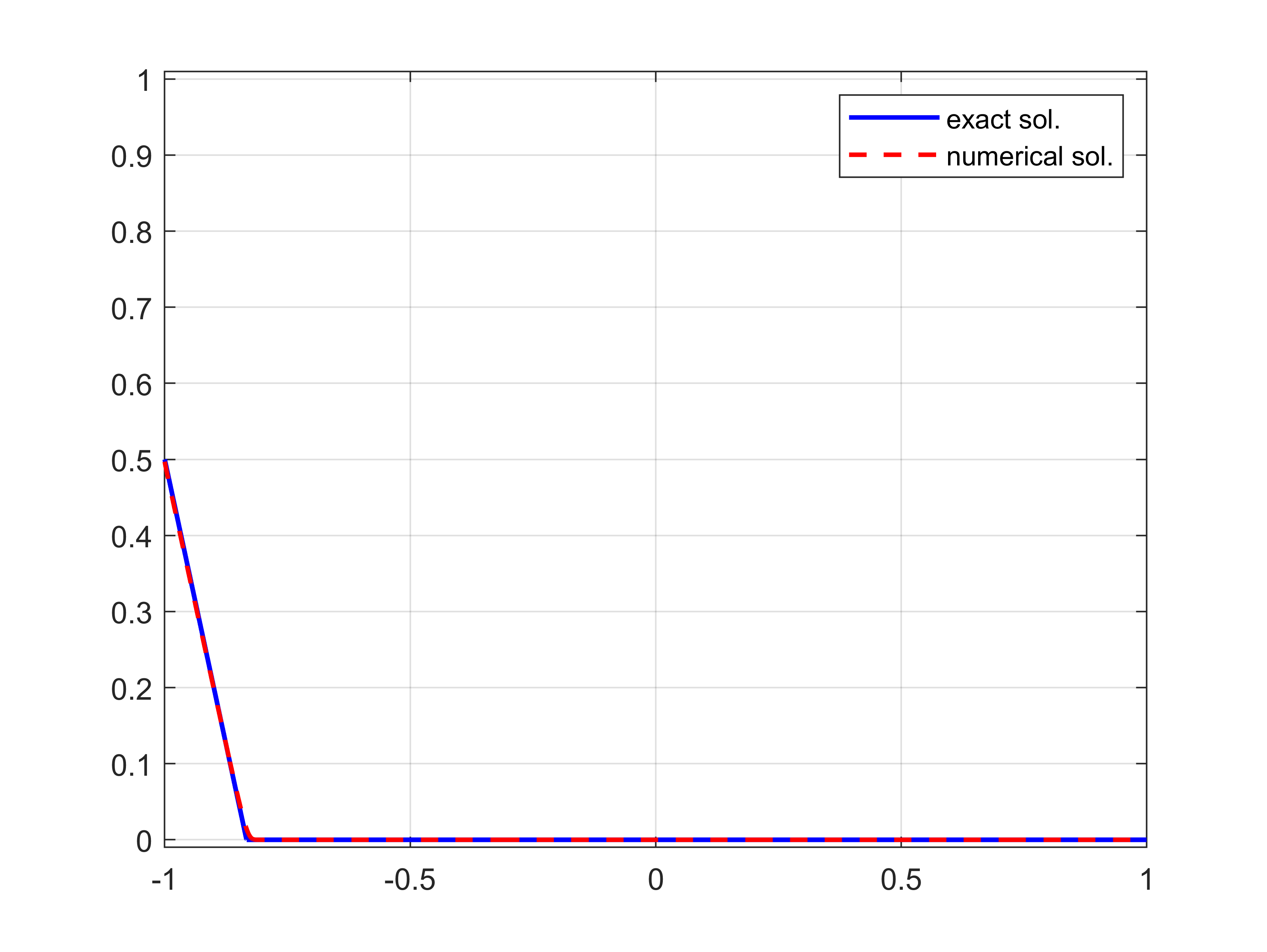}\label{fig:compare_delta0.5}}
 \caption{With reference to the simulation of Section~\ref{sec:validation}: comparison between the numerical solution obtained by implementing the filtered Chebyshev scheme with the explicit solution at a fixed time. The parameters for the simulation are $N= 4000$, $\Delta t=10^{-4}$, $p=10$. 
 }
  \label{fig:compare}
 \end{figure}

\begin{table}%
\centering%
%\bgroup
%\def\arraystretch{1.4}
\renewcommand\arraystretch{1.3}
\begin{tabular}{ccccc%c|
}
\toprule
%\hline
% & &\\
\multirow{2}*{$N$} & \multicolumn{2}{c}{FVS } & \multicolumn{2}{c}{Filtered Chebyshev}\\
& $E^s_{L^1}$& conv. rate&$E^s_{L^1}$&conv. rate\\
%&&\\
\midrule
%\hline
$60$&$2.1928 \times 10^{-1}$&$-$&$3.2568\times 10^{-1}$&$-$
\\
%\hline
$120$&$1.1380\times 10^{-1}$&$0.9463$&$8.2841\times 10^{-2}$&$1.9750$
\\
%\hline
$600$&$2.4933\times 10^{-2}$&$0.9441$&$5.3537\times 10^{-3}$&$1.7697$
\\
%\hline
$1200$&$1.6393\times10^{-2}$&$0.8831$&$1.0139\times 10^{-3}$&$1.8764$
\\
%\hline
$6000$&$7.1933\times10^{-3}$&$0.7574$&$4.1364\times 10^{-4}$&$1.5131$
\\
\bottomrule
\end{tabular}
\renewcommand\arraystretch{1}
\caption{With reference to Section~\ref{sec:validation}, the relative error between the exact solution and its numerical approximation at time $t^s=1/6$ as function of the number of collocation points, both for the implementation of the finite volume scheme and the filtered Chebyshev method. The parameters for the simulations are $\Delta t=10^{-4}$ and $p=10$. The relative error is related to the incoming edges.}
\label{tab:error}
\end{table}

\subsection{Computation of the execution time for a Riemann problem at the junction}
\label{sec:CPU}
 We analyze here the performance of the method applied to a Riemann problem on network in terms of the computational cost required to complete the simulation and compare the results with the ones obtained by using finite volume schemes.

On each edge we consider the same flux $f_h(u)=u\left(1-u\right)$ and fix constant initial conditions:
\[
u_{1,0}(x) = \frac{1}{4},\quad u_{2,0}(x) =\frac{2}{3},\quad u_{3,0}(x) = \frac{4}{5}.
\]

In Table~\ref{tab:CPU} we find that, even if the method allows us to have a better accuracy with respect to the implementation of (FVS), it takes much more time to complete the simulation. The reason of such result could be related to the fact that, although FFT allows us to reduce the computation cost to obtain Chebyshev coefficients, the method has to take into account the cost of multiplications in the right hand side of~\eqref{eq:filterdiscmethodinterior}.

\begin{table}%
\centering%
%\bgroup
%\def\arraystretch{1.4}
\renewcommand\arraystretch{1.3}
\begin{tabular}{ccc%c|
}
\toprule
%\hline
% & &\\
\multirow{2}*{$N$} & \multicolumn{2}{c}{CPU time $[s]$ }\\
& FVS& Filtered Chebyshev\\
%&&\\
\midrule
%\hline
$60$&$2.1587 \times 10^{0}$&$1.9512 \times 10^{0}$\\
%\hline
$120$&$3.5127\times 10^{0}$&$3.6512\times 10^0$\\
%\hline
$600$&$7.2146\times 10^{1}$&$9.324\times  10^2$\\
%\hline
$1200$&$2.6903\times10^{3}$&$9.2543\times 10^{4}$
\\
%\hline
$6000$&$3.2498\times10^{4}$&$2.3416\times 10^{5}$
\\
\bottomrule
\end{tabular}
\renewcommand\arraystretch{1}
\caption{With reference to Section~\ref{sec:CPU}, the CPU cost required by the (FVS) and filtered Chebyshev method, respectively, to complete the simulation on the whole network on the time interval $[0,1]$ as function of the number of collocation points $N$ on each road. The parameters for the simulations are $\Delta t=10^{-4}$ and $p=10$.}
\label{tab:CPU}
\end{table}

\subsection{Implementation of the 2D-Chebyshev spectral method}
\label{sec:2dchebycomp}

In this section, we perform a simulation on a degenerate network consisting in one single edge without any junction. This particular situation does not require the computation of the boundary data to establish the transmission condition, thus represents a good framework in order to validate the 2D-Chebyshev method described in~\ref{sec:fully-2D}.

We consider the same flux as in the previous simulations and we fix $[-1,1]$ as time domain, so that the initial condition has to be considered at time $t=-1$. We take
\[
u_0(x)=\chi_{[-3/4,-1/4]}(x) -\frac{2}{3}\left(x-\frac{5}{2}\right)\chi_{[-1/4,1/2]}(x)
\]
as initial condition.

In Figure~\ref{fig:comp-chebyxt}, we make a comparison between the profile of the solution at time $t=1$ computed by means of the filtered Chebyshev method and the 2D-Chebyshev method. As expected, We can observe a perfect agreement in the results.

A difference in the performance can be found in terms of execution time. Indeed, midpoint scheme seems more competitive, as shown in Table~\ref{tab:chebyxt}.

\begin{figure}
    \centering
    \includegraphics[width=0.5\textwidth]{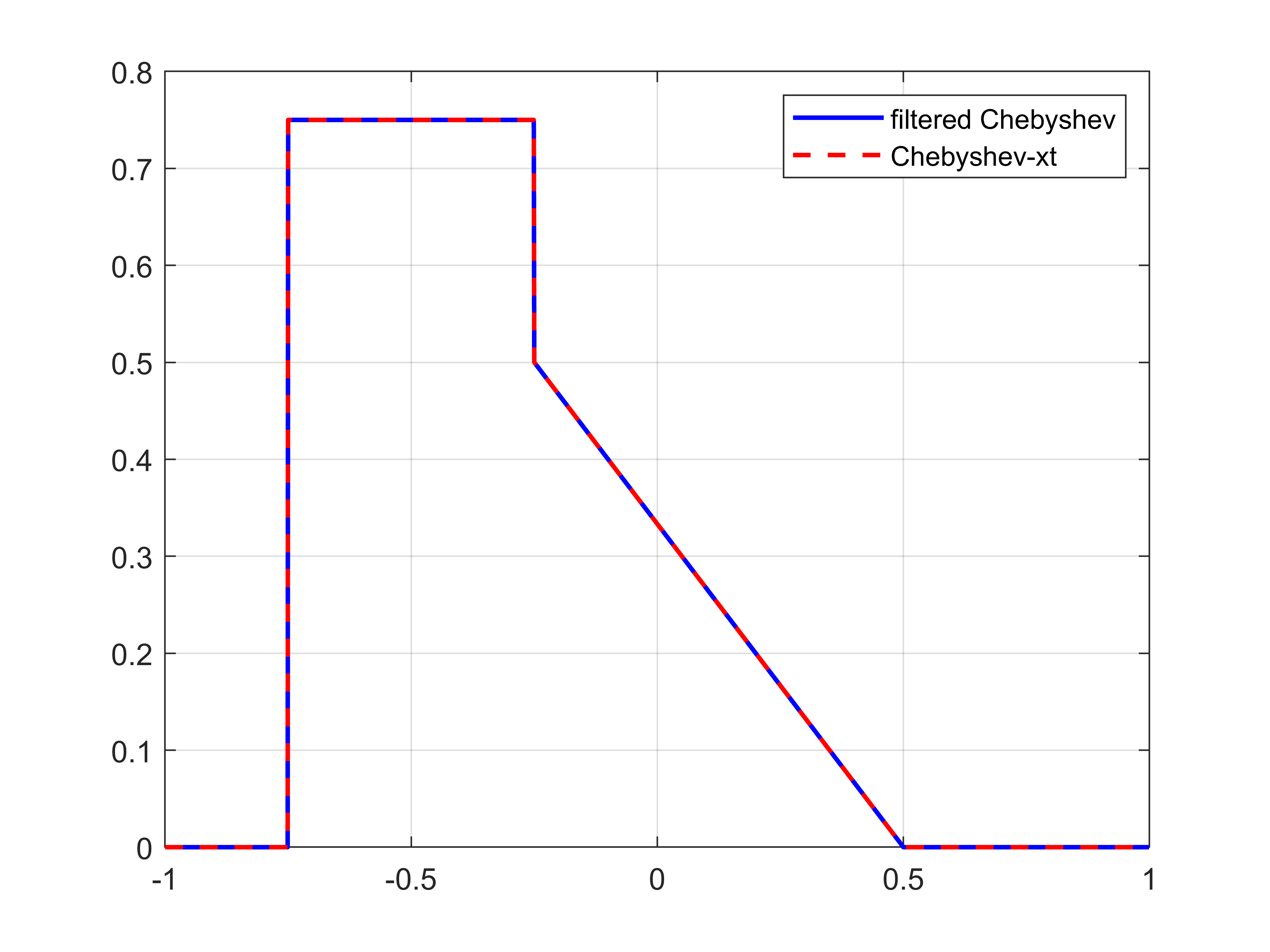}
    \caption{With reference to Section~\ref{sec:2dchebycomp}, the comparison of the solution at time $t=1$, obtained by implementing the filtered Chebyshev method coupled with the Midpoint method and the 2D-Chebyshev method for both spatial and time discretization.}
    \label{fig:comp-chebyxt}
\end{figure}

\begin{table}%
\centering%
%\bgroup
%\def\arraystretch{1.4}
\renewcommand\arraystretch{1.3}
\begin{tabular}{ccc%c|
}
\toprule
%\hline
% & &\\
\multirow{2}*{$N$} & \multicolumn{2}{c}{CPU time $[s]$ }\\
& Filtered Chebyshev& 2D Chebyshev\\
%&&\\
\midrule
%\hline
$60$&$0.7163 \times 10^{0}$&$1.3651 \times 10^{0}$\\
%\hline
$120$&$1.1753\times 10^{0}$&$4.3652\times 10^1$\\
%\hline
$600$&$2.4087\times 10^{1}$&$7.2314\times  10^2$\\
%\hline
$1200$&$8.952\times10^{2}$&$6.0697\times 10^{4}$
\\
%\hline
$6000$&$1.0943\times10^{4}$&$3.1936\times 10^{5}$
\\
\bottomrule
\end{tabular}
\renewcommand\arraystretch{1}
\caption{With reference to Section~\ref{sec:2dchebycomp}, the CPU cost required by the filtered Chebyshev method coupled with Midpoint method and the 2D Chebyshev spectral scheme, respectively, as function of $N$.}
\label{tab:chebyxt}
\end{table}

\section{Conclusions}

The paper focuses on the implementation of a filtered Chebyshev spectral scheme to solve numerically a system of conservation laws on network. The modification of the Chebyshev coefficients by multiplication with a filter function allows us to avoid the Gibbs phenomenon near shock discontinuity. As a consequence, the method can benefit of the high-accuracy properties of spectral methods. We prove the convergence of the semi-discretized method and we perform some simulations in order to make a comparison between the proposed scheme and the implementation of a finite volume scheme. Our tests show the gain of one order in the convergence rate, even if the filtered Chebyshev method requires an higher computational cost. We also investigate the performance of a Chebyshev method to discretize the problem both in the space and in the time variable. This approach is applied only to a single edge is able to approximate well the solution of the problem, but it seems expensive from a computational point of view. Moreover, it need to be coupled with a different Riemann solver, because it requires to know in advance the values of the boundary data for every time. This issue represents the starting point for a further investigation.
\label{sec:concl}

\section*{Acknowledgments}
The author is member of the INdAM Research group GNCS. She has been supported by \textit{REFIN} Project, grant number D1AB726C funded by Regione Puglia, by INdAM - GNCS 2023 Project, grant number CUP$\_$E53C22001930001 and by \textit{PNRR MUR - M4C2}, grant number N00000013 - CUP D93C22000430001. She also acknowledges the partial support of ``Finanziamento giovani ricercatori 2022'' funded by GNCS-INdAM.

%%% Loading bibliography style file
%\bibliographystyle{model1-num-names}
%
%% Loading bibliography database
%\bibliography{biblio}
%%\section*{References}

%\bibliographystyle{model1b-num-names}
\bibliographystyle{plain}
\bibliography{biblio}

\end{document}